\theoremstyle{plain}
\newtheorem{theorem}{Theorem}
\newtheorem{lemma}[theorem]{Lemma}
\theoremstyle{definition}
\newtheorem{definition}[theorem]{Definition}
\theoremstyle{remark}
\title{\textbf{A direct and elementary proof of the well-definedness of the interior and exterior polynomials of hypergraphs}}
\author{
	\small Xiaxia Guan, \ \ Xian'an Jin\footnote{Corresponding author}, \ \ Tianlong Ma\\[0.2cm]
	\small School of Mathematical Sciences, Xiamen University,\\
	\small Xiamen, Fujian 361005, China\\[0.2cm]
	\small E-mails: gxx0544@126.com; xajin@xmu.edu.cn; tianlongma@aliyun.com}
\date{}
\begin{document}
\begin{abstract}
T. K\'{a}lm\'{a}n (A version of Tutte's polynomial for hypergraphs, Adv. Math. 244 (2013) 823-873.) introduced the interior and exterior polynomials which are generalizations of the Tutte polynomial $T(x,y)$ on plane points $(1/x,1)$ and $(1,1/y)$ to hypergraphs. The two polynomials are defined under a fixed ordering of hyperedges, and are proved to be independent of the ordering using techniques of polytopes. In this paper, similar to the Tutte's original proof we provide a direct and elementary proof for the well-definedness of the interior and exterior polynomials of hypergraphs.
\vskip0.2cm

\noindent{\bf Keywords:} Hypergraphs; Bipartite graphs; Interior polynomial; Exterior polynomial; Well-definedness
\vskip0.2cm

\noindent{\bf AMS subject classification 2020:}\ 05C31; 05C65
\end{abstract}
\maketitle
\section{Introduction}
\noindent

It is well-known that the Tutte polynomial $T(x, y)$ \cite{Tutte} is an important invariant of graphs.  Motivated by the study of the HOMFLY polynomial \cite{Freyd,Hom2}, which is an important invariant in knot theory,  K\'{a}lm\'{a}n, in \cite{Kalman1}, introduced  the interior polynomial $I_{\mathcal{H}}(x)$ and the exterior polynomial $X_{\mathcal{H}}(y)$ which are generalization of $T(1/x,1)$ and $T(1,1/y)$ to hypergraphs $\mathcal{H}$, respectively.

A \emph{hypergraph} is a  pair $\mathcal{H}=(V,E)$, where $V$ is a finite set and $E$ is a finite multiset of non-empty subsets of $V$. Elements of $V$ are called \emph{vertices} and  elements of $E$ are called \emph{hyperedges}. For a hypergraph $\mathcal{H}=(V,E)$,  its associated bipartite graph $Bip \mathcal{H}$ is a bipartite graph which the sets $V$ and $E$ are the colour classes of  $Bip \mathcal{H}$, and an element $v$ of $V$ is connected to an element $e$ of $E$ if and only if $v\in e$. In this paper, we consider a connected hypergraph  $\mathcal{H}=(V,E)$, that is,  $Bip \mathcal{H}$ is connected.

A hypertree in a connected hypergraph $\mathcal{H}$ is a function $f$: $E\rightarrow N=\{0,1,2,\cdots\}$ such that a spanning tree $\tau$ of its associated bipartite graph $Bip \mathcal{H}$ can be found with the degree of $e$ in $\tau$ $d_{\tau}(e)=f(e)+1$ for any $e\in E$. We call that $\tau$ \emph{realises} or \emph{induces} $f$. We denote the set of all hypertrees in $\mathcal{H}$ with $B_{\mathcal{H}}$.
Let $f$ be a hypertree and $e$, $e'$ be two distinct hyperedges of $\mathcal{H}$. We say that $f$ is the hypertree  such that \emph{a transfer of valence} is possible from  $e$ to $e'$ if  the function $f'$ obtained from $f$ by decreasing $f(e)$ by 1 and increasing $f(e')$ by 1 is also a hypertree.

Given an order on $E$, let $f$ be a hypertree. A hyperedge $e\in E$ is \emph{internally active} with respect to the hypertree $f$  if for any hyperedge $e'<e$, $e'$  can not be transferred  valence from $e$. We say that with respect to  $f$, a hyperedge $e\in E$ is \emph{internally inactive} if it is not internally active. Let $\iota(f)$ ($\overline{\iota}(f)$, resp.) denote the number of internally  active (inactive, resp.) hyperedges with respect to $f$. This value is called the \emph{internal activity} (\emph{internal inactivity}, resp.) of $f$.
A hyperedge $e\in E$ is \emph{externally active} with respect to  $f$  if for any hyperedge $e'<e$, $e'$  can not transfer  valence to $e$. We say that with respect to  $f$, a hyperedge $e\in E$ is \emph{externally inactive} if it is not externally active. Let $\epsilon(f)$ ($\overline{\epsilon}(f)$, resp.) denote the number of externally  active (inactive, resp.) hyperedges with respect to $f$. This value is called the \emph{external activity} (\emph{external inactivity}, resp.) of $f$.
We denote the \emph{interior polynomial} $I_{\mathcal{H}}(x)=\sum\limits_{f\in B_{\mathcal{H}}} x^{\overline{\iota}(f)}$ and the \emph{exterior polynomial}  $X_{\mathcal{H}}(y)=\sum\limits_{f\in B_{\mathcal{H}}} y^{\overline{\epsilon}(f)}$.

In \cite{Kalman1} and \cite{Kalman3}, K\'{a}lm\'{a}n and Postnikov proved that
the interior polynomial and the exterior polynomial are two invariants of hypergraphs by a straightforward argument using technique of polytope and indirect approach by counting Ehrhart-type lattice point, respectively, which can be stated as follows.
\begin{theorem}[\cite{Kalman1}]\label{thm 4}
 The interior and exterior polynomials of a connected hypergraph $\mathcal{H}=(V,E)$ do not depend on the chosen order on $E$.
\end{theorem}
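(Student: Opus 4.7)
The plan is to follow Tutte's original strategy: reduce the theorem to showing that swapping two \emph{adjacent} hyperedges in the order leaves the polynomial unchanged, and then construct a bijective pairing of hypertrees that witnesses this invariance. Since any two linear orders on $E$ are related by a sequence of adjacent transpositions, it suffices to fix two orderings $\prec_1,\prec_2$ that agree on $E\setminus\{e,e'\}$ and swap two adjacent hyperedges $e,e'$ with $e\prec_1 e'$ and $e'\prec_2 e$. For every other hyperedge $g\in E\setminus\{e,e'\}$ and every hypertree $f$, the set of $\prec_i$-predecessors of $g$ does not depend on $i$, so the internal activity status of $g$ at $f$ is the same in both orderings. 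Hence the only possible discrepancy in $\overline{\iota}(f)$ between the two orderings comes from the activities of $e$ and $e'$.

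A direct comparison of the definitions gives the local characterization: $e$ is internally active in $\prec_2$ iff it is internally active in $\prec_1$ \emph{and} the transfer $e\to e'$ from $f$ is impossible; symmetrically, $e'$ is internally active in $\prec_1$ iff it is internally active in $\prec_2$ \emph{and} the transfer $e'\to e$ from $f$ is impossible. In particular the net change $\overline{\iota}_{\prec_2}(f)-\overline{\iota}_{\prec_1}(f)$ lies in $\{-1,0,+1\}$ and is controlled by the two transfer conditions at $f$.

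The crux is the following exchange identity, immediate from the definition of the transfer: if $f^{*}=\sigma_{e\to e'}(f)$ is a hypertree, then for every $h\in E\setminus\{e,e'\}$ the functions $\sigma_{e\to h}(f)$ and $\sigma_{e'\to h}(f^{*})$ coincide componentwise. Consequently, $e$ is internally active at $f$ in $\prec_1$ if and only if $e'$ is internally active at $f^{*}$ in $\prec_2$. This suggests organizing hypertrees into maximal \emph{transfer orbits}, namely chains $f_a,f_{a+1},\ldots,f_b$ obtained by iterating $\sigma_{e\to e'}$ and $\sigma_{e'\to e}$ while keeping all other coordinates fixed (with $f_k(e)=k$). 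Within such an orbit the activities of $e$ in $\prec_1$ and of $e'$ in $\prec_2$ telescope via $\alpha_1(f_{k+1})=\beta_2(f_k)$, and a careful accounting of how the orbit contributes to $\sum_f x^{\overline{\iota}_{\prec_1}(f)}$ and to $\sum_f x^{\overline{\iota}_{\prec_2}(f)}$ should match the two sums orbit by orbit.

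The main obstacle I anticipate is this last accounting step. Within an orbit, the inactivity of an \emph{auxiliary} hyperedge $g\notin\{e,e'\}$ at $f_k$ may itself depend on $k$, because whether a transfer $\sigma_{g\to e}$ or $\sigma_{g\to e'}$ yields a hypertree depends on $f_k(e)$ and $f_k(e')$. To control this, I would derive additional exchange identities of the same flavor as above (relating $\sigma_{g\to e}(f_k)$ with $\sigma_{g\to e'}(f_{k-1})$, and so on) and use them to refine the orbit pairing so that the ``auxiliary'' variations cancel in a way compatible with the $\alpha_1/\beta_2$ telescoping. The argument for the exterior polynomial $X_{\mathcal{H}}(y)$ follows the same template, with the roles of ``out-transfer'' and ``in-transfer'' exchanged throughout, so that $B_{\mathcal{H}}$ is partitioned by the same transfer orbits and an analogous bijection yields the invariance of $\sum_f y^{\overline{\epsilon}(f)}$.
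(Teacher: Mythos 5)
Your overall architecture coincides with the paper's: reduce to an adjacent transposition of $e$ and $e'$, observe that only these two hyperedges can change activity status (your predecessor observation is the paper's Fact 1), use the componentwise exchange identity to relate the activity of $e$ at $f$ to that of $e'$ at $\sigma_{e\to e'}(f)$ (the paper obtains the same comparisons from transitivity of transfers, Lemma \ref{lem 3}, in its Claims 3 and 5), and partition $B_{\mathcal{H}}$ into the chains you call transfer orbits (the paper's families $\mathcal{F}_f=\{g_1,\dots,g_l\}$). Up to that point the proposal is sound and matches the paper.

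The genuine gap is exactly the ``main obstacle'' you flag and then defer: controlling the activities of the auxiliary hyperedges $g\notin\{e,e'\}$ across an orbit. This is not a routine refinement to be supplied later; it is the technical heart of the proof, and your sketch (``derive additional exchange identities \dots so that the auxiliary variations cancel'') does not say how to obtain them. Two things are needed. First, one must show that the interior elements of an orbit (those for which both $\sigma_{e\to e'}$ and $\sigma_{e'\to e}$ stay in $B_{\mathcal{H}}$) are \emph{individually} invariant under the swap, so that no telescoping across the whole chain is required and only the two endpoints ever need to be exchanged; a naive consecutive-pairing telescope would force you to track auxiliary activities at every step. Second, for the two endpoints, in the only sub-case where their contributions genuinely swap rather than being separately equal, one must prove that every auxiliary hyperedge has the \emph{same} activity at one endpoint as at the other. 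The paper proves this (Claims 7, 8, $7'$, $8'$) by characterizing transfers through tight subsets $E'$ with $\sum_{e\in E'}f(e)=\mu(E')$ (Lemma \ref{lem 6}), using that tight sets are closed under union and intersection (Theorem \ref{thm 2}, i.e.\ submodularity of $\mu$), together with the monotonicity statements of Lemmas \ref{lem 5} and \ref{lem 10} showing how tightness propagates from one endpoint of the chain to the other. Some substitute for this machinery is unavoidable: componentwise exchange identities alone only compare transfers out of $e$ with transfers out of $e'$ at adjacent orbit elements, and cannot rule out that an auxiliary hyperedge $g$ flips from active to inactive somewhere inside the orbit. Until you supply this step, the orbit-by-orbit matching of $\sum_f x^{\overline{\iota}(f)}$ and $\sum_f y^{\overline{\epsilon}(f)}$ under the two orders is not established.
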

In this paper, similar to the idea that Tutte proves the independence of the ordering of edges of his polynomial in \cite{Tutte}, we provide a direct proof of Theorem \ref{thm 4}. Our proof is also elementary, without the use of polytopes.

\section{Preliminaries}
\noindent

In this section, to prove our Theorem \ref{thm 4}, we first introduce several definitions and significant conclusions. Moreover, we shall list some known results and prove them by the basic technique.
\begin{definition}
Let $G=(E\cup V, \varepsilon)$ be a connected bipartite graph. For a subset $E'\subset E$, let $G|_{E'}$ denote the bipartite graph formed by $E'$, all edges of $G$ incident with elements of $E'$ and their endpoints in $V$.
 We denote $\mu(E')=0$ for $E'=\emptyset$, and $\mu(E')=|\bigcup E'|-c(E')$ for $E'\neq \emptyset$, where $c(E')$ is the number of connected components of $G|_{E'}$ and $\bigcup E'=V\cap (G|_{E'})$.
\end{definition}
In \cite{Kalman1}, the sufficient and necessary conditions of hypertrees of a hypergraph was obtained.
\begin{theorem} [\cite{Kalman1}] \label{thm 1}
Let $\mathcal{H}=(V,E)$ be a connected hypergraph and $Bip \mathcal{H}$ be the bipartite graph associated to the hypergraph $\mathcal{H}$. Let $f$ be a hypertree of $\mathcal{H}$. Then

(1) $0\leq f(e)\leq d_{Bip \mathcal{H}}(e)-1$ for all $e\in E$;

(2) $\sum\limits_{e\in E}f(e)=|V|-1$;

(3) $\sum\limits_{e\in E'}f(e)\leq \mu(E')$ for all $ E'\subset E$.
\end{theorem}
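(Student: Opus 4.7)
The plan is to fix any spanning tree $\tau$ of $Bip\mathcal{H}$ that realises $f$ (so $d_\tau(e)=f(e)+1$ for every $e\in E$) and to read off (1)--(3) from elementary tree- and forest-counts. For (1), since $Bip\mathcal{H}$ is connected and $\tau$ is a spanning tree on more than one vertex, every $e\in E$ satisfies $d_\tau(e)\geq 1$, giving $f(e)\geq 0$; and $\tau\subseteq Bip\mathcal{H}$ gives $f(e)\leq d_{Bip\mathcal{H}}(e)-1$. For (2), $\tau$ has $|V|+|E|$ vertices and hence $|V|+|E|-1$ edges; because $Bip\mathcal{H}$ is bipartite with parts $V$ and $E$, summing degrees on the $E$-side counts every edge of $\tau$ exactly once, so $\sum_{e\in E}d_\tau(e)=|V|+|E|-1$, and subtracting $|E|$ gives (2).

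The main step is (3). I would fix $E'\subseteq E$ (the case $E'=\emptyset$ is trivial) and let $F$ be the subgraph of $\tau$ formed by all edges of $\tau$ incident to some element of $E'$; as a subgraph of a tree, $F$ is a forest. Its vertex set is $E'\cup V_F$ with $V_F\subseteq\bigcup E'$, and its edge count is $\sum_{e\in E'}d_\tau(e)$, so the forest identity $|E(F)|=|V(F)|-c(F)$ combined with $d_\tau(e)=f(e)+1$ yields
$$\sum_{e\in E'} f(e)=|V_F|-c(F).$$
To compare this with $\mu(E')$, extend $F$ to a spanning forest $F^{\ast}$ of $G|_{E'}$ by adjoining the $|\bigcup E'|-|V_F|$ vertices of $\bigcup E'\setminus V_F$ as isolated points, so $c(F^{\ast})=c(F)+|\bigcup E'|-|V_F|$. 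Since $F^{\ast}$ is a spanning subgraph of $G|_{E'}$ and a spanning subgraph has at least as many connected components as the whole graph, $c(E')\leq c(F^{\ast})$; rearranging gives $|V_F|-c(F)\leq|\bigcup E'|-c(E')=\mu(E')$, which is (3).

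The only delicate point is part (3): $F$ need not visit every vertex of $\bigcup E'$, and restricting $Bip\mathcal{H}$ to $E'$ can break $G|_{E'}$ into several components, so one cannot compare $c(F)$ with $c(E')$ directly. The isolated-vertex device equalises the two vertex sets and lets the standard ``spanning subgraph has at least as many components'' inequality carry the argument; once this bookkeeping is in place, what remains is pure forest edge-counting.
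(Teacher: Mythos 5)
Your argument is correct and complete. Note, however, that the paper offers no proof of this statement at all: Theorem~2 is quoted from K\'alm\'an's paper as a known characterisation of hypertrees, so there is no in-paper argument to compare yours against. (In K\'alm\'an's original treatment these facts come out of the identification of the set of hypertrees with the lattice points of a polymatroid-type polytope.) Your derivation is a clean, purely graph-theoretic alternative: parts (1) and (2) are immediate degree and edge counts in a realising spanning tree $\tau$, and part (3) is handled exactly where it needs care. In particular, your observation that the forest $F$ of $\tau$-edges incident to $E'$ satisfies $\sum_{e\in E'}f(e)=|V_F|-c(F)$, followed by padding $F$ with the isolated vertices of $\bigcup E'\setminus V_F$ so that it becomes a spanning subgraph of $G|_{E'}$ and then invoking $c(E')\leq c(F^{\ast})$, is exactly the bookkeeping required; without it the comparison of $c(F)$ with $c(E')$ would indeed be illegitimate. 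This elementary route is fully in the spirit of the present paper, which aims to avoid polytope techniques.
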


\begin{lemma}[\cite{Kalman1}]
Let $\mathcal{H}=(V,E)$ be a connected hypergraph and $Bip \mathcal{H}$ be the bipartite graph associated to  $\mathcal{H}$. If $g:E\rightarrow N$ is a function satisfying two conditions as follows: (i) $g(e)\geq0$ for any hyperedge $e\in E$; (ii)  $\sum\limits_{e\in E'}g(e)\leq \mu(E')$ for any nonempty subset $E'\subset E$ but not necessarily $\sum\limits_{e\in E}f(e)=|V|-1$, then there exists a cycle-free subgraph $G'$ of $Bip \mathcal{H}$ with $d_{G'}(e)=f(e)+1$.
\end{lemma}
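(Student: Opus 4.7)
The plan is to prove the lemma by induction on $N=\sum_{e\in E}g(e)$. For the base case $N=0$, one has $g\equiv 0$, and a suitable $G'$ is obtained by choosing, for each hyperedge $e\in E$, any vertex $v_e\in e$ and setting $G'=\{(e,v_e):e\in E\}$; every hyperedge has degree $1$ in $G'$, which alone precludes any cycle in a bipartite graph.

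For the inductive step, I pick some $e_0\in E$ with $g(e_0)\geq 1$ and define $g'(e_0)=g(e_0)-1$ and $g'(e)=g(e)$ for $e\neq e_0$. Since decreasing values only weakens condition (ii), $g'$ still satisfies (i) and (ii), so the inductive hypothesis yields a cycle-free $G''\subseteq Bip\mathcal{H}$ with $d_{G''}(e)=g'(e)+1$ for all $e$. I want to produce the desired $G'$ by adding one edge $(e_0,v)$ with $v\in e_0$ to $G''$ without creating a cycle; this is possible precisely when some $v\in e_0$ lies outside the connected component $C$ of $e_0$ in $G''$, that is, when $e_0\not\subseteq V_C$, where $V_C:=V(C)\cap V$ and $E_C:=V(C)\cap E$.

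The crux is showing I can always reduce to this situation. Assume $e_0\subseteq V_C$. Since $C$ is a tree on $|E_C|+|V_C|$ vertices, counting edges of $C$ through their $E$-endpoints yields $\sum_{e\in E_C}(g'(e)+1)=|E_C|+|V_C|-1$, and hence (using $e_0\in E_C$) $\sum_{e\in E_C}g(e)=|V_C|$. The connected subgraph $C$ already touches every vertex of $V_C$ and every hyperedge of $E_C$, so $Bip\mathcal{H}|_{E_C}$ is connected, giving $c(E_C)=1$ and $\mu(E_C)=|\bigcup E_C|-1$. Condition (ii) applied to $E_C$ then forces $|\bigcup E_C|\geq|V_C|+1$, supplying some $v^*\in\bigcup E_C\setminus V_C$ with $v^*\in e^*$ for some $e^*\in E_C$; because $e_0\subseteq V_C$ while $v^*\notin V_C$, we must have $e^*\neq e_0$.

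I then swap: let $w$ be the $G''$-neighbor of $e^*$ on the unique path from $e^*$ to $e_0$ in $C$, and replace the edge $(e^*,w)$ of $G''$ by $(e^*,v^*)$. Degrees are preserved, and the result is still a forest because removing $(e^*,w)$ breaks $C$ into two subtrees and $v^*\notin V(C)$, so the added edge joins $e^*$ to a different component. In the new forest, $e_0$'s component is the $w$-side subtree of $C$, whose total vertex count is strictly less than $|V(C)|$ (the $e^*$-side is nonempty, containing $e^*$). Replacing $G''$ by this modified forest and iterating, $|V(C)|$ strictly decreases at each step, so after finitely many iterations I arrive at a forest in which $e_0\not\subseteq V_C$, and the required extension by an edge $(e_0,v)$ completes the induction. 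The main technical obstacle is the bookkeeping producing $\sum_{e\in E_C}g(e)=|V_C|$ together with $c(E_C)=1$, because it is precisely this identity that lets condition (ii) applied to the set $E_C$ deliver the outside vertex $v^*$ needed to drive the swap.
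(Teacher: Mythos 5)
Your argument is correct. Note first that the paper itself gives no proof of this lemma: it is quoted from K\'alm\'an's paper and used as a black box, so there is nothing internal to compare against; your proof is a self-contained, elementary replacement for the citation. The induction on $\sum_{e}g(e)$ is sound: the base case (one pendant edge per hyperedge) is trivially a forest since every cycle in a bipartite graph would force some $E$-vertex to have degree at least $2$; in the inductive step, decreasing $g(e_0)$ by one preserves (i) and (ii), and the only real work is showing that the forest $G''$ can be modified, without changing any $E$-degrees, until some vertex of $e_0$ lies outside the component $C$ of $e_0$. Your key computation is right: if $e_0\subseteq V_C$, then counting the edges of the tree $C$ through their $E$-endpoints gives $\sum_{e\in E_C}g(e)=|V_C|$, while $Bip\,\mathcal{H}|_{E_C}$ is connected (it contains $C$ and every extra vertex of $\bigcup E_C$ is adjacent to some member of $E_C$), so condition (ii) yields $|V_C|\le|\bigcup E_C|-1$ and hence the outside vertex $v^*\in e^*\setminus V_C$ with $e^*\ne e_0$. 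The swap replacing $(e^*,w)$ by $(e^*,v^*)$ preserves all $E$-degrees, keeps the graph a forest (the two pieces of $C$ are reattached to a component not containing $e_0$), and strictly shrinks $e_0$'s component, so the process terminates and the final augmenting edge $(e_0,v)$ exists and creates no cycle. This is essentially the standard exchange argument one would expect behind K\'alm\'an's lemma, and it has the virtue of being entirely graph-theoretic, in the same elementary spirit the paper advertises for its main theorem. One cosmetic remark: the lemma as printed writes $f$ where it means $g$; you silently and correctly read it as $d_{G'}(e)=g(e)+1$.
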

It is clear that $G'$ is a spanning tree of $Bip \mathcal{H}$ if $\sum\limits_{e\in E}f(e)=|V|-1$ holds. Then we have the following conclusion.
\begin{lemma}\label{lem 1}
If $g:E\rightarrow N$ is a function satisfying three conditions as follows: (i) $g(e)\geq0$ for any hyperedge $e\in E$; (ii) $\sum\limits_{e\in E}g(e)=|V|-1$; (iii) $\sum\limits_{e\in E'}g(e)\leq \mu(E')$ for any nonempty subset $ E^{\prime}\subset E$, then $g$ is a hypertree.
\end{lemma}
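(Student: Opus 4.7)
The plan is to invoke the preceding lemma (the one requiring only conditions (i) and (iii), stated in the excerpt above) to produce a cycle-free subgraph $G'$ of $Bip\,\mathcal{H}$ with $d_{G'}(e)=g(e)+1$ for every $e\in E$, and then use condition (ii) together with a standard edge-count argument to upgrade ``cycle-free'' to ``spanning tree'', which is exactly the definition of a hypertree.

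First I would observe that in $G'$ every hyperedge-vertex is present, since $d_{G'}(e)\ge 1$, and that every edge of $Bip\,\mathcal{H}$ joins $E$ to $V$. Summing degrees across the $E$-side therefore counts each edge of $G'$ exactly once:
\[
|E(G')| \;=\; \sum_{e\in E} d_{G'}(e) \;=\; \sum_{e\in E}\bigl(g(e)+1\bigr) \;=\; (|V|-1)+|E|,
\]
where the last equality uses condition (ii).

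Next, let $V'\subseteq V$ be the set of $V$-vertices incident to $G'$, and let $c(G')$ denote the number of connected components of $G'$. Since a forest with $n$ vertices has $n-c$ edges,
\[
|V'|+|E|-c(G') \;=\; |E(G')| \;=\; (|V|-1)+|E|,
\]
so $|V'|=|V|-1+c(G')$. Because $V'\subseteq V$ we have $c(G')\le 1$, and because $G'$ contains every element of $E$ it has at least one component, so $c(G')=1$ and $|V'|=|V|$. Hence $G'$ is a connected spanning subgraph of $Bip\,\mathcal{H}$ with no cycles, i.e.\ a spanning tree that realises $g$; by the definition of a hypertree, $g\in B_{\mathcal{H}}$.

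The argument is really just bookkeeping, so there is no serious obstacle: the only subtle point is that the previous lemma gives no a priori control over which $V$-vertices appear in $G'$ or over how many components $G'$ has, and one must recover both ``spanning'' and ``connected'' simultaneously from the single equality in (ii). Degenerate cases ($|E|=0$ or $|V|=1$) do not arise nontrivially because $Bip\,\mathcal{H}$ is assumed connected, so the counting above applies directly.
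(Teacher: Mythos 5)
Your proposal is correct and follows exactly the route the paper takes: apply the preceding lemma (conditions (i) and (iii)) to obtain a cycle-free subgraph $G'$ with $d_{G'}(e)=g(e)+1$, then use condition (ii) to conclude $G'$ is a spanning tree. The paper simply declares this last step ``clear,'' whereas you supply the edge/component count that justifies it; the substance is the same.
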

For any nonempty subset $ E^{\prime}\subset E$, if there is  a hypertree $f_{i}$ such that $\sum\limits_{e\in E^{\prime}}g(e)\leq \sum\limits_{e\in E^{\prime}}f_{i}(e)$, then $\sum\limits_{e\in E^{\prime}}g(e)\leq \mu(E')$ by Theorem \ref{thm 1} (3). Thus we have the following result.
\begin{lemma}[\cite{Kalman1}]\label{lem 2}
Let $\mathcal{H}=(V,E)$ be a connected hypergraph, and \{$f_{i}$\} be the set of hypertrees of $\mathcal{H}$.
Let $g:E\rightarrow N$ is a function satisfying two conditions as follows: (i) $g(e)\geq0$ for any hyperedge $e\in E$; (ii) $\sum\limits_{e\in E}g(e)=|V|-1$. If for any nonempty subset $ E^{\prime}\subset E$, there is  a hypertree $f_{i}$ such that $\sum\limits_{e\in E^{\prime}}g(e)\leq \sum\limits_{e\in E'}f_{i}(e)$ holds, then $g$ is a hypertree.
\end{lemma}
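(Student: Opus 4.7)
The plan is essentially spelled out in the short paragraph preceding the lemma: reduce to Lemma \ref{lem 1} by using the existing hypertree $f_i$ as a certificate for the submodular-type bound $\sum_{e\in E'}g(e)\le \mu(E')$.

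More concretely, I would argue as follows. Conditions (i) and (ii) of Lemma \ref{lem 2} already match conditions (i) and (ii) of Lemma \ref{lem 1}, so the only thing left to verify is condition (iii), namely that $\sum_{e\in E'}g(e)\le \mu(E')$ for every nonempty $E'\subset E$. First I would fix an arbitrary nonempty $E'\subset E$ and invoke the hypothesis of Lemma \ref{lem 2} to obtain a hypertree $f_i$ (depending on $E'$) with $\sum_{e\in E'}g(e)\le \sum_{e\in E'}f_i(e)$. Next, since $f_i$ is a genuine hypertree of $\mathcal{H}$, Theorem \ref{thm 1}(3) applied to $f_i$ gives $\sum_{e\in E'}f_i(e)\le \mu(E')$. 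Chaining the two inequalities yields $\sum_{e\in E'}g(e)\le \mu(E')$, which is precisely condition (iii) of Lemma \ref{lem 1}.

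Having verified all three hypotheses of Lemma \ref{lem 1} for $g$, I would conclude that $g$ is itself a hypertree, which is what we wanted to show.

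There is no real obstacle here: the lemma is a direct consequence of Theorem \ref{thm 1}(3) and Lemma \ref{lem 1}, and the argument reduces to choosing the correct certificate $f_i$ for each $E'$. The only mildly delicate point is to note that the hypertree $f_i$ is allowed to vary with $E'$ (the hypothesis does not require a single hypertree to dominate $g$ on all subsets simultaneously), but this is harmless because condition (iii) of Lemma \ref{lem 1} is a condition that can itself be checked one subset at a time.
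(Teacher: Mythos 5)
Your argument is correct and is exactly the paper's own reasoning: the paragraph preceding Lemma \ref{lem 2} chains the hypothesis $\sum_{e\in E'}g(e)\le\sum_{e\in E'}f_i(e)$ with Theorem \ref{thm 1}(3) applied to $f_i$ to obtain $\sum_{e\in E'}g(e)\le\mu(E')$, and then concludes via Lemma \ref{lem 1}. Your remark that $f_i$ may depend on $E'$ is a valid and worthwhile clarification, but there is no substantive difference in approach.
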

One  sufficient condition that a hyperedge can transfer valence to another  hyperedge for a hypertree, which is proved by technique of the hypertree polytope in \cite{Kalman1}, is given as follows. However, we prove it again based on Lemma \ref{lem 2}.
\begin{lemma}[\cite{Kalman1}]\label{lem 3}
	Let $\mathcal{H}=(V,E)$ be a connected hypergraph, and $f$ be a hypertree of $\mathcal{H}$. Let $e_{i}$ $(i=1,2,3)$ be distinct hyperedges of $\mathcal{H}$. With respect to  $f$, if  $e_{1}$ can transfer valence to $e_{2}$, and $e_{2}$ can transfer valence to $e_{3}$ with respect to $f$,  then  $e_{1}$ can transfer valence to $e_{3}$ with respect to $f$.
\end{lemma}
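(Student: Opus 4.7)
The plan is to apply Lemma \ref{lem 2} to the function $g:E\to N$ defined by $g(e_1)=f(e_1)-1$, $g(e_3)=f(e_3)+1$, and $g(e)=f(e)$ for every other hyperedge; showing $g$ is a hypertree is exactly what it means to say $e_1$ can transfer valence to $e_3$ with respect to $f$. Along with $f$ itself, the two witnesses I will exploit are the hypertree $f'$ produced by the valence transfer from $e_1$ to $e_2$ (so $f'(e_1)=f(e_1)-1$, $f'(e_2)=f(e_2)+1$) and the hypertree $f''$ produced by the transfer from $e_2$ to $e_3$ (so $f''(e_2)=f(e_2)-1$, $f''(e_3)=f(e_3)+1$). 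These are hypertrees by hypothesis and will serve as the ``comparison hypertrees'' demanded by the hypothesis of Lemma \ref{lem 2}.

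Conditions (i) and (ii) of Lemma \ref{lem 2} will be immediate: nonnegativity of $g$ reduces to $f(e_1)\ge 1$, which is forced by the existence of the transfer from $e_1$ to $e_2$, while $\sum_{e\in E}g(e)=\sum_{e\in E}f(e)=|V|-1$ since the $-1$ at $e_1$ cancels the $+1$ at $e_3$. The real content is verifying condition (iii): for every nonempty $E'\subset E$, exhibit some hypertree $h\in\{f,f',f''\}$ with $\sum_{e\in E'}g(e)\le\sum_{e\in E'}h(e)$.

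I will organize (iii) by a short case analysis depending on which of $e_1,e_3$ lie in $E'$. When $e_3\notin E'$, the quantity $\sum_{E'}g$ is at most $\sum_{E'}f$ (equal if $e_1\notin E'$, strictly smaller if $e_1\in E'$), so $f$ itself works. When $e_1,e_3\in E'$, the $+1$ and $-1$ cancel on $E'$ and again $f$ works. The only delicate case is $e_1\notin E'$ and $e_3\in E'$, where $\sum_{E'}g=\sum_{E'}f+1$; here I further split on whether $e_2\in E'$. If $e_2\in E'$, then $\sum_{E'}f'=\sum_{E'}f+1$ (the $-1$ at $e_1$ is outside $E'$, the $+1$ at $e_2$ is inside), so $f'$ works. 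If $e_2\notin E'$, then $\sum_{E'}f''=\sum_{E'}f+1$ (the $-1$ at $e_2$ is outside $E'$, the $+1$ at $e_3$ is inside), so $f''$ works.

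The main obstacle I anticipate is keeping this case analysis tight and convincing that the three candidate hypertrees $f,f',f''$ really do cover every possible $E'$; once that bookkeeping is laid out carefully, Lemma \ref{lem 2} directly certifies that $g$ is a hypertree, which is exactly the transfer of valence from $e_1$ to $e_3$. No further polytope or combinatorial machinery will be needed.
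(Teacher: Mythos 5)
Your proposal is correct and follows essentially the same route as the paper: both define the candidate function $g=f_3$ (decrease at $e_1$, increase at $e_3$) and verify the hypothesis of Lemma \ref{lem 2} using the two witness hypertrees arising from the given transfers, differing only in how the case analysis on $E'$ is organized (the paper manages with $f_1,f_2$ alone, while you also invoke $f$ itself in two of the cases). Your case split is exhaustive and each comparison is verified correctly, so the argument goes through.
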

\begin{proof}
	Let the hypertree $f$ be ($\cdots,f(e_{3}),\cdots,f(e_{2}),\cdots,f(e_{1}),\cdots$). Since with respect to  $f$, $e_{1}$ can transfer valence to $e_{2}$, and $e_{2}$ can transfer valence to $e_{3}$,  there are the following two hypertrees.
	
	$f_{1}$: ($\cdots,f(e_{3}),\cdots,f(e_{2})+1,\cdots,f(e_{1})-1,\cdots$);
	
	$f_{2}$: ($\cdots,f(e_{3})+1,\cdots,f(e_{2})-1,\cdots,f(e_{1}),\cdots$).
	
	Next we prove that $f_{3}$: ($\cdots,f(e_{3})+1,\cdots,f(e_{2}),\cdots,f(e_{1})-1,\cdots$) is a hypertree of $\mathcal{H}$.
	
	It is clear that $f_{3}$ satisfies $f_{3}(e)\geq0$ for any hyperedge $e\in E$ and $\sum\limits_{e\in E}f_{3}(e)=|V|-1$. For any nonempty subset $E^{\prime}\in E$, if $e_{1}\in E'$ or $e_{1},e_{2}\notin E'$, then $\sum\limits_{e\in E'}f_{3}(e)\leq\sum\limits_{e\in E'}f_{2}(e)$. If $e_{1}\notin E'$ and $e_{2}\in E'$, then $\sum\limits_{e\in E'}f_{3}(e)\leq\sum\limits_{e\in E'}f_{1}(e)$.
	Thus, by Lemma \ref{lem 2}, $f_{3}$ is a hypertree of $\mathcal{H}$. It implies that $e_{1}$ can transfer valence to $e_{3}$  with respect to  $f$.
\end{proof}

Recall that for a hypertree $f$, by Theorem \ref{thm 1}, we have $\sum\limits_{e\in E'}f(e)\leq \mu(E')$ for all $E'\subset E$. In particular, we give the following definition when $\sum\limits_{e\in E'}f(e)= \mu(E')$ for a subset $E'\subset E$.
\begin{definition}
	Let $\mathcal{H}=(V,E)$ be a connected hypergraph. Let $f$ be a hypertree of $\mathcal{H}$.  For a subset $E'\subset E$, we say that $E'$ is  tight at $f$ if  $\sum\limits_{e\in E'}f(e)= \mu(E')$.
\end{definition}
It is clear that if $E'$ is  tight at $f$ and $\sum\limits_{e\in E'}f(e)= \sum\limits_{e\in E'}g(e)$ for another hypertree $g$, then $E'$ is  tight at $g$. Moreover, the following theorem holds immediately from Theorem 44.2 in \cite{Schrijver}.
\begin{theorem}\label{thm 2}
	Let $\mathcal{H}=(V,E)$ be a connected hypergraph. Let $f$ be a hypertree of $\mathcal{H}$.  If the subsets $A\subset E$ and $B\subset E$ are both tight at $f$, then $A\cap B$ and $A\cup B$ are both tight at $f$.
\end{theorem}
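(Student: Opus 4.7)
The plan is to invoke the standard principle that the tight constraints in a submodular--additive system form a lattice. The map $E' \mapsto \sum\nolimits_{e \in E'} f(e)$ is additive in $E'$, and Theorem~\ref{thm 1}(3) asserts that it is dominated everywhere by $\mu$. Assuming $\mu$ is submodular, the chain
\[ \mu(A) + \mu(B) = \sum\nolimits_{e \in A} f(e) + \sum\nolimits_{e \in B} f(e) = \sum\nolimits_{e \in A \cup B} f(e) + \sum\nolimits_{e \in A \cap B} f(e) \leq \mu(A \cup B) + \mu(A \cap B) \leq \mu(A) + \mu(B) \]
must collapse to equalities: the first step uses tightness of $A$ and $B$, the second is additivity of the sum, the third is Theorem~\ref{thm 1}(3) applied to $A \cup B$ and $A \cap B$, and the fourth is submodularity of $\mu$. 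Combined with the pointwise bound from Theorem~\ref{thm 1}(3), equality throughout forces $A \cup B$ and $A \cap B$ individually to be tight.

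The substantive ingredient is therefore submodularity of $\mu$. I would establish it by passing to the cycle matroid $M$ of the associated bipartite graph $G = Bip \mathcal{H}$. For $E' \subseteq E$ let $F(E')$ denote the set of edges of $G$ that are incident to some vertex of $E'$. Because $G$ is bipartite and every edge has a unique endpoint in $E$, the assignment $E' \mapsto F(E')$ preserves both unions and intersections, so $F(A \cup B) = F(A) \cup F(B)$ and $F(A \cap B) = F(A) \cap F(B)$. A direct count of vertices and components in the subgraph spanned by $F(E')$ yields $\operatorname{rank}_M(F(E')) = (|E'| + |\bigcup E'|) - c(E') = \mu(E') + |E'|$. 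Since $|\cdot|$ on subsets of $E$ is additive and matroid rank is submodular, subtracting gives $\mu(A) + \mu(B) \geq \mu(A \cup B) + \mu(A \cap B)$, as required.

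The only nontrivial step is the matroid reduction in the previous paragraph; the rest is textbook bookkeeping with submodular functions, and is precisely the abstract content of Theorem~44.2 of \cite{Schrijver} from which the statement can be cited directly.
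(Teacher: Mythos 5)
Your proof is correct and is essentially the argument the paper relies on: the paper offers no proof of its own, simply citing Theorem~44.2 of \cite{Schrijver}, whose content is exactly the tight-set lattice argument in your first paragraph. Your verification that $\mu$ is submodular, via the identity $\operatorname{rank}_M(F(E'))=\mu(E')+|E'|$ for the cycle matroid of $Bip\,\mathcal{H}$ and the fact that $F$ preserves unions and intersections, correctly supplies the one ingredient the paper leaves implicit.
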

In \cite{Kalman1}, the following result was presented by using Theorem \ref{thm 2}.

\begin{lemma}[\cite{Kalman1}]\label{lem 9}
Let $G=(V\cup E, \varepsilon)$ be a connected bipartite graph. Let $f$ be a hypertree of $G$. Then for any non-empty subset $E' \subset E$,  if $E'$ is not tight at $f$, then $f$ is a hypertree  so that a transfer of valence is possible from  some element of $E\setminus E'$ to  some element of $E'$.
\end{lemma}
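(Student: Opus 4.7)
The plan is to argue by contradiction, aggregating local obstructions into a single tight set via Theorem \ref{thm 2}, and then closing with the monotonicity of $\mu$.

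First I would characterise when a proposed transfer fails. Fix $e_1 \in E \setminus E'$ and $e_2 \in E'$, and let $f'$ be obtained from $f$ by decreasing $f(e_1)$ by $1$ and increasing $f(e_2)$ by $1$. Applying Lemma \ref{lem 1} to $f'$: the sum condition $\sum_{e \in E} f'(e) = |V|-1$ is automatic, non-negativity requires $f(e_1) \geq 1$, and the subset inequality $\sum_{e \in S} f'(e) \leq \mu(S)$ can only fail when $e_2 \in S$ and $e_1 \notin S$, in which case it reduces to $S$ being tight at $f$. Thus the transfer from $e_1$ to $e_2$ fails precisely when either $f(e_1) = 0$ or some tight set $S$ at $f$ contains $e_2$ but not $e_1$.

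Next I would suppose, for contradiction, that no transfer from $E \setminus E'$ to $E'$ is possible, and set $E_{+} = \{e \in E \setminus E' : f(e) \geq 1\}$. Then for every $e_2 \in E'$ and every $e_1 \in E_{+}$ there is a tight set $S_{e_1,e_2}$ with $e_2 \in S_{e_1,e_2}$ and $e_1 \notin S_{e_1,e_2}$. Applying Theorem \ref{thm 2} repeatedly, the union
\[
C \;=\; \bigcup_{e_2 \in E'}\,\bigcup_{e_1 \in E_{+}} S_{e_1,e_2}
\]
is again tight at $f$, contains $E'$, and is disjoint from $E_{+}$. Hence $C \setminus E' \subseteq (E \setminus E') \setminus E_{+}$, and on this set $f$ vanishes identically.

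Finally I would combine these observations with the monotonicity of $\mu$ to reach the contradiction. A short check from the definition shows $\mu(A) \leq \mu(B)$ whenever $A \subseteq B \subseteq E$ (adding a hyperedge either introduces a component or glues components together while possibly adding new vertices, and each of these operations weakly increases $\mu$), so $\mu(C) \geq \mu(E')$. On the other hand, since $C$ is tight at $f$ and $f$ vanishes on $C \setminus E'$,
\[
\mu(C) \;=\; \sum_{e \in C} f(e) \;=\; \sum_{e \in E'} f(e) \;<\; \mu(E'),
\]
where the strict inequality uses the hypothesis that $E'$ is not tight at $f$. This contradicts $\mu(C) \geq \mu(E')$.

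The main obstacle I anticipate is the bookkeeping in the very first step, namely isolating "a tight set separating $e_2$ from $e_1$'' as the only real obstruction (after $f(e_1) = 0$); in particular one has to notice that the upper bound in Theorem \ref{thm 1}(1) is subsumed by the subset inequality applied to the singleton $\{e_2\}$, so no separate check is needed. Once this reduction is in hand, the argument is essentially a one-shot application of the lattice closure of tight sets (Theorem \ref{thm 2}) combined with the elementary monotonicity of $\mu$.
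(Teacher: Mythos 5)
The paper states this lemma without proof (it is cited from K\'alm\'an and said to follow from Theorem \ref{thm 2}), so there is no in-paper argument to compare against; judged on its own terms, your strategy --- characterise failed transfers via Lemma \ref{lem 1}, aggregate the obstructing tight sets via Theorem \ref{thm 2}, and contradict the monotonicity of $\mu$ --- is sound, and your first step is in effect an independent derivation of Lemma \ref{lem 6}. There is, however, one step that fails as written: the set $C=\bigcup_{e_2\in E'}\bigcup_{e_1\in E_{+}}S_{e_1,e_2}$ is \emph{not} disjoint from $E_{+}$. Each $S_{e_1,e_2}$ avoids only its own $e_1$; for $e_1'\neq e_1$ the set $S_{e_1,e_2}$ may perfectly well contain $e_1'$, so the double union can meet (even contain) all of $E_{+}$, and the ensuing claim that $f$ vanishes on $C\setminus E'$ collapses. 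The repair stays entirely within your toolkit: for each $e_2\in E'$ first form $T_{e_2}=\bigcap_{e_1\in E_{+}}S_{e_1,e_2}$, which is tight by the intersection half of Theorem \ref{thm 2}, contains $e_2$, and avoids every element of $E_{+}$; then set $C=\bigcup_{e_2\in E'}T_{e_2}$. With this $C$ the rest of your argument goes through verbatim.

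Two minor loose ends. You should dispose of the degenerate case $E_{+}=\emptyset$ separately (your $C$ is then empty and contains no $e_2$): if $f$ vanishes on all of $E\setminus E'$, then $\sum_{e\in E'}f(e)=|V|-1\geq |V|-c(E')\geq\mu(E')$, which together with Theorem \ref{thm 1}(3) forces $E'$ to be tight, contrary to hypothesis. And the monotonicity of $\mu$, which you assert in a parenthetical, is correct but deserves the explicit one-line computation: adjoining a hyperedge $e$ that meets $k$ of the existing components changes $\mu$ by $|e\setminus\bigcup E'|+k-1$, which is nonnegative both when $k\geq 1$ and when $k=0$ (since hyperedges are nonempty). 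With these repairs the proof is complete and self-contained, relying only on Lemma \ref{lem 1}, Theorem \ref{thm 1}(3) and Theorem \ref{thm 2}.
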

Next, based on Lemmas  \ref{lem 3} and \ref{lem 9}, we give a necessary and sufficient condition that a hyperedge can transfer valence to another  hyperedge with respect to a hypertree.

\begin{lemma}\label{lem 6}
Let $\mathcal{H}=(V,E)$ be a connected hypergraph, and let $f$ be a hypertree of $\mathcal{H}$. Let $e$ and $e'$ be two distinct  hyperedges of $\mathcal{H}$. $e$ can transfer valence to $e'$  with respect to  $f$ if and only if  $f(e)\neq 0$ and every subset $E'\subset  E$ which contains $e'$ and does not contain $e$, is not tight at $f$.
\end{lemma}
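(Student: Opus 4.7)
The plan is to prove both directions directly, handling the easy implication by a short tightness computation and the harder one by a contradiction argument that combines Lemma \ref{lem 9} with the transitivity Lemma \ref{lem 3}.

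For the forward direction ($\Rightarrow$), suppose $e$ can transfer valence to $e'$ with respect to $f$, and let $f'$ be the resulting hypertree (so $f'(e)=f(e)-1$ and $f'(e')=f(e')+1$). Since $f'(e)\geq 0$, we immediately get $f(e)\neq 0$. Now fix any $E'\subset E$ with $e'\in E'$ and $e\notin E'$. Then
\[
\sum_{a\in E'} f(a) \;=\; \sum_{a\in E'} f'(a) - 1 \;\leq\; \mu(E')-1 \;<\; \mu(E'),
\]
where the middle inequality is Theorem \ref{thm 1}(3) applied to the hypertree $f'$. Hence $E'$ is not tight at $f$, which finishes this direction.

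For the reverse direction ($\Leftarrow$), the strategy is to introduce the set
\[
E^{*} \;=\; \{e'\} \,\cup\, \{\, a\in E \,:\, a \text{ can transfer valence to } e' \text{ with respect to } f \,\}
\]
and to prove that $e\in E^{*}$. Suppose for contradiction that $e\notin E^{*}$. Then $E^{*}$ contains $e'$ but not $e$, so by hypothesis $E^{*}$ is not tight at $f$. By Lemma \ref{lem 9}, applied to the non-tight set $E^{*}$, there exist hyperedges $e_{1}\in E\setminus E^{*}$ and $e_{2}\in E^{*}$ such that $f$ admits a transfer of valence from $e_{1}$ to $e_{2}$. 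If $e_{2}=e'$, then $e_{1}$ transfers to $e'$ and so $e_{1}\in E^{*}$, contradicting $e_{1}\in E\setminus E^{*}$. Otherwise $e_{2}\in E^{*}\setminus\{e'\}$, so $e_{2}$ itself transfers to $e'$; Lemma \ref{lem 3} (transitivity of transfers with respect to $f$) then yields that $e_{1}$ transfers to $e'$, again placing $e_{1}$ in $E^{*}$, a contradiction. Therefore $e\in E^{*}$, and since $e\neq e'$ this means $e$ can transfer valence to $e'$; the hypothesis $f(e)\neq 0$ is what makes the actual decrement $f(e)\mapsto f(e)-1$ legitimate and is in fact implicit in the conclusion $e\in E^{*}$.

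The main obstacle is the reverse implication: one must find a single set to which Lemma \ref{lem 9} can be profitably applied. The key insight is that the ``reachability'' set $E^{*}$ of hyperedges that can transfer to $e'$ is automatically closed under predecessors because Lemma \ref{lem 3} gives transitivity of transfers relative to the same hypertree $f$; this closure property is exactly what turns the conclusion of Lemma \ref{lem 9} (a transfer from outside $E^{*}$ into $E^{*}$) into the desired contradiction. Everything else is routine.
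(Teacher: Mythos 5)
Your proof is correct and follows essentially the same route as the paper: the sufficiency argument in both cases combines Lemma \ref{lem 9} with the transitivity of transfers from Lemma \ref{lem 3} applied to the set of hyperedges that can transfer valence to $e'$; the paper grows this set iteratively from $\{e'\}$, while you take the full reachability set at once and derive a contradiction, which is just a cleaner packaging of the same idea. Your explicit computation for the necessity direction (which the paper dismisses as obvious) is also correct.
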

\begin{proof}
The necessity is obvious. For sufficiency, let us take $E_{1}=\{e'\}$. Since $E_{1}$ is not tight at $f$ some element of $E\setminus E_{1}$ can transfer valence to $e'$ for $f$ by Lemma \ref{lem 9}. Let $U_{1}$ be the set consisting of  all elements of $E\setminus E_{1}$ that can transfer valence to $e'$ for $f$. If $e\in U_{1}$, then the conclusion is true. If $e\notin U_{1}$, then we take $E_{2}=U_{1}\cup E_{1}$. Note that $E_{2}$ is not tight at $f$. Then some element of $E\setminus E_{2}$ can transfer  valence to some element of $E_{2}$ for $f$ by Lemma \ref{lem 9}. Let $U_{2}$ be the set consisting of  all elements of $E\setminus E_{2}$ that can transfer  valence to some element of $E_{2}$ for $f$. It is obvious that $E_{1}$ is a proper subset of $E_{2}$. Moreover, all elements of $E_{3}=U_{2}\cup E_{2}$ (except for $e'$) can transfer  valence to $e'$ for $f$ by Lemma \ref{lem 3}, and  $E_{2}$ is a proper subset of $E_{3}$. Continue the above process, we will eventually obtain that $e$ can transfer valence to $e'$ for $f$.
\end{proof}
In the end of this section, let $\mathcal{H}=(V,E)$ be a connected hypergraph, and let $e_{1}$ and $e_{2}$ be two distinct hyperedges of $\mathcal{H}$. Let $f_{1}$ and $f_{2}$ be two hypertrees of $\mathcal{H}$ with $f_{1}(e_{1})<f_{2}(e_{1})$ and $f_{1}(e)=f_{2}(e)$ for any  $e\in E\setminus\{e_{1},e_{2}\}$. Firstly,
we present one known result which is proved by technique of the hypertree polytope in \cite{Kalman1}. However, we prove it again by using  basic concept.
\begin{lemma}[\cite{Kalman1}]\label{lem 5}
(1) If $f_{1}$ is a hypertree such that valence can  be transferred from $e_{2}$ to $e$, then $f_{2}$ is a hypertree such that valence can  be transferred from $e_{1}$ to $e$.

(2)  If $f_{1}$ is a hypertree such that valence can  be transferred from $e$ to $e_{1}$, then $f_{2}$ is a hypertree such that valence can  be transferred from $e$ to $e_{2}$.
\end{lemma}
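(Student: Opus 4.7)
The plan is to reduce both parts to the necessary-and-sufficient characterization of transferability supplied by Lemma~\ref{lem 6}. The key preliminary observation is that, since $f_1$ and $f_2$ are both hypertrees, Theorem~\ref{thm 1}(2) gives $\sum_{e\in E} f_1(e) = \sum_{e\in E} f_2(e) = |V|-1$; combined with the hypothesis that $f_1$ and $f_2$ agree off $\{e_1,e_2\}$ and that $f_1(e_1) < f_2(e_1)$, this forces $f_1(e_2) > f_2(e_2)$. This ``conservation of valence'' between $e_1$ and $e_2$ is exactly what will let me compare $f_1$-sums and $f_2$-sums over arbitrary subsets of $E$.

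For part (1), Lemma~\ref{lem 6} asks me to verify that $f_2(e_1)\neq 0$ and that no subset $E'\subset E$ containing $e$ but not $e_1$ is tight at $f_2$. The first is immediate from $f_1(e_1)<f_2(e_1)$. For the second I would argue by contradiction: suppose such an $E'$ is tight at $f_2$ and split on whether $e_2\in E'$. If $e_2\notin E'$, then $f_1$ and $f_2$ coincide on all of $E'$, so $E'$ is also tight at $f_1$; but this violates the hypothesis, since applying Lemma~\ref{lem 6} to the assumption that $f_1$ transfers valence from $e_2$ to $e$ forbids any tight set containing $e$ and omitting $e_2$. If $e_2\in E'$, then the strict inequality $f_2(e_2)<f_1(e_2)$ together with equality on the remaining elements of $E'$ yields $\sum_{e''\in E'} f_2(e'') < \sum_{e''\in E'} f_1(e'') \leq \mu(E')$ by Theorem~\ref{thm 1}(3), contradicting the assumed tightness at $f_2$.

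Part (2) runs symmetrically. I need $f_2(e)\neq 0$, which holds because $e\notin\{e_1,e_2\}$ (transfers are between distinct hyperedges) so $f_2(e)=f_1(e)$, and $f_1(e)\neq 0$ by Lemma~\ref{lem 6} applied to the hypothesis on $f_1$. Then I rule out tight subsets $E'$ containing $e_2$ but not $e$, again by a case split on whether $e_1\in E'$. If $e_1\in E'$, both $e_1$ and $e_2$ lie in $E'$ and the two opposite shifts cancel, so $\sum_{e''\in E'} f_2(e'') = \sum_{e''\in E'} f_1(e'')$; tightness at $f_2$ would then give tightness at $f_1$ on a set containing $e_1$ but not $e$, contradicting the hypothesis that $f_1$ transfers from $e$ to $e_1$. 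If $e_1\notin E'$, only the decrease at $e_2$ contributes, giving $\sum_{e''\in E'} f_2(e'') < \sum_{e''\in E'} f_1(e'') \leq \mu(E')$, again a contradiction.

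I do not anticipate a serious obstacle. The only slightly delicate point is the bookkeeping in the case analysis: in each case one must pick the correct comparison between $\sum_{E'} f_1$ and $\sum_{E'} f_2$ and then apply the $f_1$-hypothesis (via Lemma~\ref{lem 6}) to a subset that genuinely witnesses it. Once the identity $f_1(e_2)>f_2(e_2)$ is in hand, the remaining work is a short four-case check.
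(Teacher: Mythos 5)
Your proof is correct and takes essentially the same route as the paper's: both parts reduce to the tightness criterion of Lemma~\ref{lem 6}, use the conservation identity $f_2(e_2)<f_1(e_2)$, and perform the identical case split on whether the other distinguished hyperedge lies in the candidate tight set. The only cosmetic difference is that you phrase the non-tightness verification as a proof by contradiction.
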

\begin{proof}
(1) It is clear that $f_{2}(e_{1})\neq 0$ due to $f_{1}(e_{1})<f_{2}(e_{1})$. We claim that any subset $E_{1}\subset  E$  which contains $e$ and does not contain $e_{1}$, is not tight at $f_{2}$. Assume that $e_{2}\notin E_{1}$. By Lemma \ref{lem 6}, $E_{1}$ is not tight at $f_{1}$ as $e_{2}$ can transfer valence to $e$  with respect to  $f_{1}$. In this case, $\sum\limits_{e\in E_{1}}f_{1}(e)= \sum\limits_{e\in E_{1}}f_{2}(e)$ by construction of hypertrees $f_{1}$ and $f_{2}$. Then $E_{1}$ is not tight at $f_{2}$.
Assume that $e_{2}\in E_{1}$.  Then  $\sum\limits_{e\in E_{1}}f_{2}(e)=\sum\limits_{e\in E_{1}\setminus e_{2}}f_{2}(e)+f_{2}(e_{2})= \sum\limits_{e\in E_{1}\setminus e_{2}}f_{1}(e)+f_{2}(e_{2})<\sum\limits_{e\in E_{1}\setminus e_{2}}f_{1}(e)+f_{1}(e_{2})=\sum\limits_{e\in E_{1}}f_{1}(e)\leq \mu(E_{1})$. It implies that $E_{1}$ is not tight at $f_{2}$. Thus, by Lemma \ref{lem 6}, $e_{1}$ can transfer valence to $e$  with respect to  $f_{2}$.

(2) Since $e$ can transfer valence to $e_1$ with respect to $f_1$, $f_2(e)=f_1(e)\neq 0$. We further claim that any subset $E_{2}\subset  E$ which contains $e_{2}$ and does not contain $e$, is not tight at $f_{2}$. (i) Assume that $e_{1}\in E_{2}$. Then $E_{2}$ is not tight at $f_{1}$ as $e_{1}$ can be transferred valence from $e$  with respect to  $f_{1}$. In this case,  $\sum\limits_{e\in E_{2}}f_{1}(e)= \sum\limits_{e\in E_{2}}f_{2}(e)$ by construction of hypertrees $f_{1}$ and $f_{2}$. Then $E_{2}$ is not tight at $f_{2}$.
(ii) Assume that $e_{1}\notin E_{2}$.  Then  $\sum\limits_{e\in E_{2}}f_{2}(e)=\sum\limits_{e\in E_{2}\setminus e_{2}}f_{2}(e)+f_{2}(e_{2})= \sum\limits_{e\in E_{2}\setminus e_{2}}f_{1}(e)+f_{2}(e_{2})<\sum\limits_{e\in E_{2}\setminus e_{2}}f_{1}(e)+f_{1}(e_{2})=\sum\limits_{e\in E_{2}}f_{1}(e)\leq \mu(E_{2})$. It implies that $E_{2}$ is not tight at $f_{2}$. Thus, by Lemma \ref{lem 6}, $e_{2}$ can be transferred valence from $e$  with respect to  $f_{2}$.
\end{proof}
Next for two distinct hyperedges $e,e'\in E\backslash \{e_{1},e_{2}\}$, we prove the following result.

\begin{lemma}\label{lem 10}
(1) If  $e$ and $e_{2}$ can not transfer  valence to $e'$ with respect to  $f_{1}$, then $e$ and $e_{2}$ can not transfer  valence to $e'$ with respect to  $f_{2}$.

(2) If  $e$ can  transfer  valence to neither $e_{1}$ nor $e'$ with respect to  $f_{1}$, then  $e$ can  transfer  valence to neither $e_{1}$ nor $e'$ with respect to  $f_{2}$.
\end{lemma}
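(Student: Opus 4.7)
The plan is to reduce both parts to the characterization of ``can transfer valence'' given by Lemma \ref{lem 6}, and then exploit the particular difference between $f_1$ and $f_2$ to convert tight-at-$f_1$ witnesses into tight-at-$f_2$ witnesses. The workhorse observation is this: since $f_1$ and $f_2$ agree outside $\{e_1,e_2\}$ and both satisfy $\sum_E f = |V|-1$, we have $f_2(e_2)<f_1(e_2)$, and for any $A\subset E$ the quantity $\sum_{e\in A}f_2(e)-\sum_{e\in A}f_1(e)$ is zero when $A$ contains both or neither of $e_1,e_2$, strictly positive when $e_1\in A$ and $e_2\notin A$, and strictly negative when $e_2\in A$ and $e_1\notin A$. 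Consequently, no set with $e_1\in A$ and $e_2\notin A$ can be tight at $f_1$, for otherwise $\sum_{e\in A} f_2(e)>\mu(A)$ would violate Theorem \ref{thm 1}(3) applied to $f_2$.

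For Part (1), since $e,e'\in E\setminus\{e_1,e_2\}$ we have $f_1(e)=f_2(e)$, so the case $f_1(e)=0$ gives the conclusion for $e$ immediately; moreover $f_2(e_2)<f_1(e_2)$ forces $f_1(e_2)>0$, so the nontrivial case must be handled for $e_2$. Lemma \ref{lem 6} then yields $A_1$ tight at $f_1$ with $e'\in A_1$ and $e\notin A_1$, and $A_2$ tight at $f_1$ with $e'\in A_2$ and $e_2\notin A_2$. By Theorem \ref{thm 2}, $A_1\cap A_2$ is tight at $f_1$; it contains $e'$, misses both $e$ and $e_2$, and by the workhorse observation cannot contain $e_1$ either. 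Hence $A_1\cap A_2$ contains neither of $e_1,e_2$, so it is tight at $f_2$ as well, and this single set blocks both $e\to e'$ and $e_2\to e'$ at $f_2$ via Lemma \ref{lem 6}.

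For Part (2), again assume $f_1(e)\neq 0$ (otherwise $f_2(e)=f_1(e)=0$ gives both conclusions). Lemma \ref{lem 6} supplies $A_0$ tight at $f_1$ with $e_1\in A_0$ and $e\notin A_0$, and $A'$ tight at $f_1$ with $e'\in A'$ and $e\notin A'$. The workhorse observation forces $e_2\in A_0$, so $A_0$ contains both $e_1$ and $e_2$ and is automatically tight at $f_2$, blocking $e\to e_1$. For $A'$, the possibility $e_1\in A'$ and $e_2\notin A'$ is excluded by the observation, and the cases where $A'$ contains both or neither of $e_1,e_2$ give tightness at $f_2$ directly. The only remaining case, $e_1\notin A'$ and $e_2\in A'$, is handled by passing to $A_0\cup A'$, which is tight at $f_1$ by Theorem \ref{thm 2}, contains $e_1,e_2,e'$ and misses $e$, and is therefore tight at $f_2$.

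The main subtlety is precisely this last subcase: the natural witness $A'$ for ``$e$ cannot transfer to $e'$'' can be strictly non-tight at $f_2$, and the fix is to absorb it into a union with the parallel witness $A_0$ coming from the other half of the hypothesis. This is where Theorem \ref{thm 2} becomes essential rather than merely convenient, and it is also the reason Part (2) needs the joint assumption about $e_1$ and $e'$ rather than a statement about $e'$ alone.
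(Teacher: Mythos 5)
Your proof is correct and follows essentially the same route as the paper: extract tight witnesses at $f_1$ via Lemma \ref{lem 6}, combine them using Theorem \ref{thm 2} (intersection in part (1), union in part (2)), and check the membership pattern of the resulting set with respect to $\{e_1,e_2\}$ to transfer tightness to $f_2$. The only cosmetic omission is that in part (1), when $f_1(e)=0$, you should still note that the witness $A_2$ for $e_2$ alone misses $e_1$ (by your workhorse observation, being tight at $f_1$ and missing $e_2$) and is therefore tight at $f_2$; this follows immediately from what you have already established.
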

\begin{proof}
If  $e$ can not transfer  valence to $e'$ with respect to  $f_{1}$, then by lemma \ref{lem 6}, we have that (i) $f_{1}(e)=0$ or (ii) there is a subset $E_{1}\subset E$ which contains $e'$ and does not contain $e$, is  tight at $f_{1}$.

(1) If (i) holds, then $e$ can not transfer  valence to  $e'$ with respect to  $f_{2}$ as $f_2(e)=f_1(e)=0$.   Since $e_{2}$ can not transfer  valence to $e'$ with respect to  $f_{1}$,  there is a subset $E_{2}\subset E$ which contains $e'$ and does not contain $e_{2}$, such that it is  tight at $f_{1}$  (note that $f_{1}(e_{2})\neq 0$). It is clear that $\sum\limits_{e\in E_{2}}f_{1}(e)\leq \sum\limits_{e\in E_{2}}f_{2}(e)$ as $e_2\notin E_{2}$.  Then  $E_{2}$ is also tight at $f_{2}$, that is, $e_{2}$  cannot transfer valence to $e'$ in $f_{2}$ by lemma \ref{lem 6}. If (ii) holds,   we take $E'=E_{1}\cap E_{2}$. Then, by Theorem \ref{thm 2}, the subset $E'\subset  E$ which contains  $e'$ and does not contain $e$ and $e_{2}$, is tight at $f_{1}$. Since $e_2\notin E'$, we have $\sum\limits_{e\in E'}f_{1}(e)\leq \sum\limits_{e\in E'}f_{2}(e)$. Thus $E'$ is also tight at $f_{2}$. This implies that $e_{2}$ and $e$ cannot transfer valence to $e'$ in $f_{2}$.

(2) If (i) holds, then $e$ can  transfer  valence to neither $e_{1}$ nor $e'$ with respect to  $f_{2}$ as $f_2(e)=f_1(e)=0$.  Assume (ii) holds. If $e$ can not transfer  valence to $e_{1}$ with respect to  $f_{1}$, then there is a subset $E_{3}\subset E$ which contains $e_{1}$ and does not contain $e$, such that it is  tight at $f_{1}$. Take $E''=E_{1}\cup E_{3}$. Then the subset $E''\subset  E$, which contains  $e'$ and $e_{1}$ and does not contain $e$, is tight at $f_{1}$ by Theorem \ref{thm 2}. Since $e_1\in E''$, we have $\sum\limits_{e\in E''}f_{1}(e)\leq \sum\limits_{e\in E''}f_{2}(e)$. Thus $E''$ is also tight at $f_{2}$. This implies that $e$ can  transfer  valence to neither $e_{1}$ nor $e'$ in $f_{2}$.
\end{proof}
Given an order on $E$, for any hyperedge $e>e_{1}$ and the hypertree  $f_{1}$, if $e$  is internally  active, then $e$ can not transfer  valence to $e_{1}$. By Lemmas \ref{lem 10} (1) and \ref{lem 5} (1), we know that $e$  is internally  active with respect to  $f_{2}$. Similarly, for any hyperedge $e'>e_{2}$ and  the hypertree $f_{1}$, if $e'$  is externally  active, then $e'$ can not be transferred  valence from $e_{2}$. By Lemmas \ref{lem 10} (2) and \ref{lem 5} (2), we know that $e$  is externally  active with respect to  $f_{2}$. We have the similar result if for the hypertree  $f_{2}$, $e>e_{2}$  is internally  active and $e'>e_{1}$  is externally  active. Thus the following conclusion holds.
\begin{lemma}\label{lem 4}
Given an order on $E$,  for any hyperedge $e>\max\{e_{1},e_{2}\}$, $e$  is internally (externally, resp.) active with respect to  $f_{1}$ if and only if it  is internally (externally, resp.) active with respect to  $f_{2}$.
\end{lemma}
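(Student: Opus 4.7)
The plan is to split the biconditional into four implications---internal/external activity, each in the direction $f_1 \Rightarrow f_2$ and $f_2 \Rightarrow f_1$---and exploit the symmetry of the setup under $(f_1, f_2, e_1, e_2) \leftrightarrow (f_2, f_1, e_2, e_1)$, which reduces the work to two representative implications: the $f_1 \Rightarrow f_2$ direction for internal and for external activity. Validity of the swap follows because $f_1(e_1) < f_2(e_1)$ together with $\sum_{e} f_i(e) = |V|-1$ forces $f_2(e_2) < f_1(e_2)$, so the roles of $(f_1, e_1)$ and $(f_2, e_2)$ are interchangeable.

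For the internal-activity direction $f_1 \Rightarrow f_2$, I would fix $e > \max\{e_1, e_2\}$ internally active w.r.t.\ $f_1$ (so $e$ cannot transfer valence to any $e'' < e$ in $f_1$) and deduce the same in $f_2$ by cases on $e''$. When $e'' \notin \{e_1, e_2\}$, Lemma \ref{lem 10}(2) applied to the pair $(e, e'')$ promotes the joint statement ``$e$ cannot transfer valence to $e_1$ or to $e''$ in $f_1$'' to the same assertion in $f_2$; this simultaneously covers the case $e'' = e_1$ as a by-product. For the remaining case $e'' = e_2$, I would invoke the contrapositive of the symmetric image of Lemma \ref{lem 5}(2), namely ``if $e$ cannot transfer valence to $e_1$ in $f_1$, then $e$ cannot transfer valence to $e_2$ in $f_2$'', which again uses only internal activity of $e$ in $f_1$.

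The external-activity direction is obtained by the dual template, with transfers into $e$ replacing transfers out of $e$: Lemma \ref{lem 10}(1) handles targets $e'' \ne e_1$ (absorbing $e'' = e_2$ as a by-product), while the contrapositive of the symmetric form of Lemma \ref{lem 5}(1) handles $e'' = e_1$. The two reverse implications are then harvested by re-running the whole argument under the swap $(f_1, e_1) \leftrightarrow (f_2, e_2)$.

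The main obstacle will be bookkeeping---matching each $e''$ with the appropriate auxiliary lemma and being careful to invoke both the original and the swapped forms of Lemmas \ref{lem 5} and \ref{lem 10}---rather than any new content, since the tight-subset machinery is already packaged inside those results together with Lemma \ref{lem 6} and Theorem \ref{thm 2}. A minor edge case, that no $e'' < e$ may lie outside $\{e_1, e_2\}$ when $|E|$ is small, is not a genuine obstruction because Lemma \ref{lem 10} does not require the auxiliary hyperedge to be below $e$; in the residual situation $E = \{e, e_1, e_2\}$, the sub-cases $e'' = e_1, e_2$ can be dispatched directly by combining Lemma \ref{lem 6} with Theorem \ref{thm 2}.
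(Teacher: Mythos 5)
Your proof is correct and follows essentially the same route as the paper: Lemma \ref{lem 10} absorbs the generic smaller hyperedges together with one of $e_1,e_2$, the contrapositive of the role-swapped Lemma \ref{lem 5} handles the remaining one, and the converse implications come from the symmetry $(f_1,e_1)\leftrightarrow(f_2,e_2)$ forced by $\sum_e f_i(e)=|V|-1$. If anything you are more careful than the paper, which cites Lemma \ref{lem 10}(1) and Lemma \ref{lem 5}(1) for internal activity and the (2)-parts for external activity (the indices appear transposed relative to the lemmas' actual content) and does not spell out that the swapped, contrapositive forms of Lemma \ref{lem 5} are what is really being used.
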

\section{Proof of Theorem 1}
\noindent

It suffices to prove that the interior and exterior polynomials are equal under the following distinct orders $O$ and $O'$ on $E$, respectively, that is, $I_{\mathcal{H},O}(x)=I_{\mathcal{H},O'}(x)$ and $X_{\mathcal{H},O}(y)=X_{\mathcal{H},O'}(y)$, where

$O:e_{1}<e_{2}<\cdots<e_{h-1}<e_{h}<e_{h+1}<e_{h+2}<\cdots<e_{|E|}$, and

$O':e_{1}<e_{2}<\cdots<e_{h-1}<e_{h+1}<e_{h}<e_{h+2}<\cdots<e_{|E|}$.

For convenience, let $E_{1}=\{e_{1},e_{2},\ldots,e_{h-1}\}$, $E_{2}=E\setminus\{e_{h},e_{h+1}\}$, $E_{3}=E_{1}\cup \{e_{h}\}$ and $E_{4}=E_{1}\cup \{e_{h+1}\}$. Firstly, we have the following two obvious facts.

\textbf{Fact 1.} For any hypertree of $\mathcal{H}$ and any hyperedge $e_{i}\in E_{2}$, $e_{i}$ is internally (externally, resp.) active in $O$ if and only if $e_{i}$ is internally (externally, resp.) active in $O'$.

\textbf{Fact 2.} If $E'\subset E''$, and there is a hyperedge $e'\in E'$ such that $e'$ can be transferred valence from $e$ ($e'$ can  transfer valence to $e$,  resp.), then there is a hyperedge $e'\in E''$ such that $e'$ can be transferred valence from $e$ ($e'$ can  transfer valence to $e$,  resp.).

Let $f: (f(e_{1}),f(e_{2}),\cdots,f(e_{h-1}),f(e_{h}),f(e_{h+1}),f(e_{h+2}),\cdots,f(e_{|E|})$) be a hypertree of $\mathcal{H}$. When we discuss the activity of a hyperedge in an ordering of hyperedges with respect to the hypertree $f$, if in the context the hypertree $f$ is clear,  for simplicity, sometimes we will not state it explicitly.

By Fact 2, we have the following two claims immediately.

\textbf{Claim 1.} If $e_{h}$ is internally (externally, resp.) inactive  in $O$, then $e_{h}$ is internally (externally, resp.) inactive  in $O'$.

\textbf{Claim 2.} If $e_{h+1}$ is internally (externally, resp.) active  in $O$, then $e_{h+1}$ is internally (externally, resp.) active  in $O'$.

To deduce the relations of $\overline{\iota}_{O}(f)$ and $\overline{\iota}_{O'}(f)$, and $\overline{\epsilon}_{O}(f)$ and $\overline{\epsilon}_{O'}(f)$, we give the following claims by the existence of hypertrees $f_{1}$ and $f_{2}$, where

$f_{1}$: $(f(e_{1}),f(e_{2}),\cdots,f(e_{h-1}),f(e_{h})+1,f(e_{h+1})-1,f(e_{h+2}),\cdots,f(e_{|E|}))$, and

$f_{2}$: $(f(e_{1}),f(e_{2}),\cdots,f(e_{h-1}),f(e_{h})-1,f(e_{h+1})+1,f(e_{h+2}),\cdots,f(e_{|E|}))$.

\textbf{Claim 3.} If $f_{1}$ is a hypertree and $e_{h}$ is internally inactive (externally active, resp.)  in $O$, then $e_{h+1}$ is internally inactive (externally active, resp.)  in $O'$.

\emph{Proof of Claim 3.} We first consider that $e_{h}$ is internally inactive in $O$. Then there is a hyperedge $e_{i}\in E_{1}$ such that $e_{i}$ can be transferred valence from $e_{h}$.   It is clear that $e_{h+1}$ can transfer valence to $e_{h}$  due to the existence of hypertree $f_{1}$. By Lemma \ref{lem 3}, $e_{h+1}$ can transfer valence to $e_{i}\in E_{1}$. Thus, $e_{h+1}$ is internally inactive in $O'$.

Now we consider that $e_{h}$ is externally active  in $O$. Assume that the opposite is true, that is, $e_{h+1}$ is externally inactive  in $O'$. Then there is a hyperedge $e_{i}\in E_{1}$ such that $e_{i}$ can transfer valence to $e_{h+1}$.  $e_{h+1}$ can transfer valence to $e_{h}$ due to the existence of hypertree $f_{1}$. By Lemma \ref{lem 3}, $e_{h}$ can be transferred valence from $e_{i}\in E_{1}$, which contradicts the fact that $e_{h}$ is externally active  in $O$. Hence, $e_{h+1}$ is externally active  in $O'$.
$\qed$

\textbf{Claim 4.} Assume that $f_{1}$ is not a hypertree.

(i) If $e_{h+1}$ is internally inactive  in $O$,  then $e_{h+1}$ is internally inactive  in $O'$.

(ii) If $e_{h}$ is externally active  in $O$,  then $e_{h}$ is externally active  in $O'$.

\emph{Proof of Claim 4.} (i) If $e_{h+1}$ is internally inactive  in $O$, then there is a hyperedge $e_{i}\in E_{3}$ such that $e_{i}$ can be transferred valence from $e_{h+1}$.  $e_{h+1}$ can not transfer valence to $e_{h}$  as $f_{1}$ is not a  hypertree. Then  $e_{i}\in E_{1}$. Hence, $e_{h+1}$ is internally inactive in $O'$.

(ii) If $e_{h}$ is externally active  in $O$, then for any hyperedge $e_{i}\in E_{1}$,  $e_{i}$ can not transfer valence to $e_{h}$. We know that $e_{h+1}$ can not transfer valence to $e_{h}$ as $f_{1}$ is not a hypertree. It implies for any hyperedge $e_{j}\in E_{4}$,  $e_{j}$ can not transfer valence to $e_{h}$. Thus, $e_{h}$ is externally active  in $O'$.$\qed$

\textbf{Claim 5.}
Assume that $f_{2}$ is a hypertree.

(i) If $e_{h}$ is internally active  in $O$,  then $e_{h+1}$ is internally active  in $O'$.

(ii) If $e_{h}$ is externally inactive  in $O$,  then $e_{h+1}$ is externally inactive  in $O'$.

\emph{Proof of Claim 5.}   (i) Assume that the opposite is true, that is, $e_{h+1}$ is internally inactive  in $O'$. Then there is a hyperedge $e_{i}\in E_{1}$ such that $e_{i}$ can be transferred valence from $e_{h+1}$.  $e_{h}$ can transfer valence to $e_{h+1}$  due to the existence of the hypertree $f_{2}$. By Lemma \ref{lem 3}, $e_{h}$ can transfer valence to $e_{i}\in E_{1}$, which contradicts the fact that $e_{h}$ is internally active  in $O$. Hence, $e_{h+1}$ is internally active  in $O'$.

(ii) If $e_{h}$ is externally inactive  in $O$, then there is a hyperedge $e_{i}\in E_{1}$ such that $e_{i}$ can transfer valence to $e_{h}$. $e_{h}$ can transfer valence to $e_{h+1}$  due to the existence of the hypertree $f_{2}$. By Lemma \ref{lem 3}, $e_{i}\in E_{1}$ can transfer valence to $e_{h+1}$. Thus, $e_{h+1}$ is externally inactive  in $O'$. $\qed$

\textbf{Claim 6.} Assume that $f_{2}$ is not a hypertree.

(i) If $e_{h}$ is internally active  in $O$,  then $e_{h}$ is internally active  in $O'$.

(ii) If $e_{h+1}$ is externally inactive  in $O$,  then $e_{h+1}$ is externally inactive  in $O'$.

\emph{Proof of Claim 6.}  (i) If $e_{h}$ is internally active  in $O$, then for any hyperedge $e_{i}\in E_{1}$, $e_{i}$ can not be transferred valence from $e_{h}$. $e_{h+1}$ can not be transferred valence from $e_{h}$ as $f_{2}$ is not a hypertree. Thus
for any hyperedge $e_{j}\in E_{4}$, $e_{j}$ can not be transferred valence from $e_{h}$, that is, $e_{h}$ is internally active  in $O'$.

(ii) If $e_{h+1}$ is externally inactive  in $O$, then there is a hyperedge $e_{i}\in E_{3}$ such that $e_{i}$ can transfer valence to $e_{h+1}$. $e_{h}$ can not transfer valence to $e_{h+1}$ as $f_{2}$ is not a hypertree. Then  $e_{i}\in E_{1}$. Hence, $e_{h+1}$ is externally inactive  in $O'$.
$\qed$

Next, we divide into three cases to discuss the relations of $\overline{\iota}_{O}(f)$ and $\overline{\iota}_{O'}(f)$, and $\overline{\epsilon}_{O}(f)$ and $\overline{\epsilon}_{O'}(f)$ by using previous claims.

\textbf{Case 1.} $f_{1}$ is not a hypertree and $f_{2}$ is not a hypertree.

We know from Claims 1, 2, 4 and 6 that $e_{h}$ ($e_{h+1}$, resp.) is internally active in $O$ if and only if $e_{h}$ ($e_{h+1}$, resp.) is internally active in $O'$. The similar conclusion holds when $e_{h}$ and $e_{h+1}$  are externally active. Combining with Fact 1, we have $\overline{\iota}_{O}(f)=\overline{\iota}_{O'}(f)$ and $\overline{\epsilon}_{O}(f)=\overline{\epsilon}_{O'}(f)$.

\textbf{Case 2.} $f_{1}$ and $f_{2}$ are both hypertrees.

Clearly, $e_{h+1}$ ($e_{h}$, resp.) is internally and externally inactive in $O$ ($O'$, resp.). Moreover, we know from Claims 3 and 5 that $e_{h}$  is internally (externally, resp.) active in $O$ if and only if $e_{h+1}$  is internally (externally, resp.) active in $O'$.
Combining with Fact 1, we have $\overline{\iota}_{O}(f)=\overline{\iota}_{O'}(f)$ and
  $\overline{\epsilon}_{O}(f)=\overline{\epsilon}_{O'}(f)$.

\textbf{Case 3.} One of $f_{1}$ and $f_{2}$ is a hypertree, and the other is not a hypertree.

In this case, we consider a subset $\mathcal{F}_f$ of $B_{\mathcal{H}}$ associated to a hypertree $f$ of $\mathcal{H}$, where
\begin{eqnarray*}
\mathcal{F}_f&=&\left\{g:E\rightarrow N\bigg|\sum\limits_{e\in E}g(e)=|V|-1 \text{ and }  g(e)=f(e) \text{ for any hyperedge}\ e\in E_{2}\right\}\\
&=&\{g_{i}:i=1,2,\cdots,l\}
\end{eqnarray*}
 and $g_{1}(e_{h})<g_{2}(e_{h})<\cdots<g_{l}(e_{h})$. It is clear that $g_{i}(e_{h})=g_{i+1}(e_{h})-1$ for each $i=1,2,\cdots,l-1$ and $l\geq2$. Moreover, $f=g_{1}$ or $f=g_{l}$. Without loss of generality we assume that $f_{1}$ is a hypertree and $f_{2}$ is not a hypertree, that is, $f=g_{1}$. Let $f^{\ast}=g_{l}$. We prove that either $\overline{\iota}_{O}(f)=\overline{\iota}_{O'}(f)$ and $\overline{\iota}_{O}(f^{\ast})=\overline{\iota}_{O'}(f^{\ast})$, or $\overline{\iota}_{O}(f)=\overline{\iota}_{O'}(f^{\ast})$ and $\overline{\iota}_{O}(f^{\ast})=\overline{\iota}_{O'}(f)$, and either $\overline{\epsilon}_{O}(f)=\overline{\epsilon}_{O'}(f)$ and $\overline{\epsilon}_{O}(f^{\ast})=\overline{\epsilon}_{O'}(f^{\ast})$, or $\overline{\epsilon}_{O}(f)=\overline{\epsilon}_{O'}(f^{\ast})$ and $\overline{\epsilon}_{O}(f^{\ast})=\overline{\epsilon}_{O'}(f)$ as follows.

It is clear that with respect to  $f$,  $e_{h+1}$ can transfer valence to $e_{h}$, that is, $e_{h+1}$ is internally inactive  in $O$ and $e_{h}$ is externally inactive  in $O'$.
By Claims 1 and 6 (i), we know that with respect to  $f$, $e_{h}$  is internally active in $O$ if and only if $e_{h}$  is internally active in $O'$. Moreover, by Claims 2 and 6 (ii), with respect to $f$, $e_{h+1}$  is externally active in $O$ if and only if $e_{h+1}$ is externally active in $O'$.  Combining with Fact 1, we have that with respect to $f$, for any $e\in E_{3}$, $e$  is internally active in $O$ if and only if $e$ is internally active in $O'$, and for any $e'\in E_{4}$, $e'$  is externally active in $O$ if and only if $e'$ is externally active in $O'$.

It is clear that with respect to  $f^{\ast}$, $e_{h}$ can  transfer valence to $e_{h+1}$, that is, $e_{h}$ is internally inactive  in $O'$ and $e_{h+1}$ is externally inactive  in $O$. Since $e_{h+1}$ can not transfer valence to $e_{h}$ with respect to  $f^{\ast}$, by Claims 2 and  4 (i), we know that with respect to  $f^{\ast}$, $e_{h+1}$  is internally active in $O$ if and only if $e_{h+1}$  is internally active in $O'$. Moreover, by Claims 1 and 4 (ii), with respect to $f^{\ast}$, $e_{h}$  is externally active in $O$ if and only if $e_{h}$ is externally active in $O'$. Combining with Fact 1, we have that  with respect to $f^{\ast}$, for any $e\in E_{4}$, $e$  is internally active in $O$ if and only if $e$ is internally active in $O'$, and for any $e'\in E_{3}$, $e'$  is externally active in $O$ if and only if $e'$ is externally active in $O'$.

We need to consider the internally activity of $e_{h+1}$ in $O'$ and externally activity of $e_{h}$ in $O$ with respect to  $f$ as follows.

Firstly, the internally activity of $e_{h+1}$ with respect to  $f$ in $O'$ is considered.

If $e_{h+1}$  is internally inactive with respect to  $f$ in $O'$, then $\overline{\iota}_{O}(f)=\overline{\iota}_{O'}(f)$, and there is a hyperedge $e_{i}\in E_{1}$ such that $e_{i}$ can be transferred valence from $e_{h+1}$ with respect to  $f$. By  Lemma \ref{lem 5} (1), we know that $e_{i}$ can be transferred valence from $e_{h}$ with respect to  $f^{\ast}$, that is, $e_{h}$  is internally inactive in $O$ with respect to  $f^{\ast}$. Thus, $\overline{\iota}_{O}(f^{\ast})=\overline{\iota}_{O'}(f^{\ast})$.

If $e_{h+1}$  is internally active with respect to $f$ in $O'$, then  $\overline{\iota}_{O}(f)=\overline{\iota}_{O'}(f)+1$. To prove  $\overline{\iota}_{O}(f)=\overline{\iota}_{O'}(f^{\ast})$ and   $\overline{\iota}_{O}(f^{\ast})=\overline{\iota}_{O'}(f)$, we need the following two claims.

\textbf{Claim 7.} With respect to  $f$ and $f^{\ast}$,  $e_{h}$ ($e_{h+1}$, resp.) is internally active in $O$ ($O'$, resp.).

\emph{Proof of Claim 7.} Since $e_{h+1}$  is internally active in $O'$ with respect to  $f$, for any hyperedge $e_{i}\in E_{1}$, $e_{i}$ can not be transferred valence from   $e_{h+1}$ with respect to  $f$. It follows from Lemma \ref{lem 6} that there is a  subset $E'\subset  E$ which contains $e_{i}$ and does not contain $e_{h+1}$, such that it is  tight at $f$ (note that $f(e_{h+1})\neq 0$).
It is clear that $E'$ does not contain $e_{h}$. Otherwise, $\mu(E')=\sum\limits_{e\in E'}f(e)<\sum\limits_{e\in E'}f^{\ast}(e)$, a contradiction. Then $\mu(E')=\sum\limits_{e\in E'}f(e)=\sum\limits_{e\in E'}f^{\ast}(e)$, that is, $E'$ is  also tight at $f^{\ast}$.  We have that with respect to $f$ and $f^{\ast}$ and for any hyperedge $e_{i}\in E_{1}$, $e_{i}$ can be transferred valence from neither $e_{h}$ nor $e_{h+1}$. Thus, the claim holds. $\qed$

\textbf{Claim 8.}  For the order $O$ and any hyperedge $e_{i}\in E_{2}$, $e_{i}$ is internally active with respect to $f$ if and only if $e_{i}$ is internally active with respect to  $f^{\ast}$.

\emph{Proof of Claim 8.}
We firstly consider that $e_{i}\in E_{1}$. By Claim 7, we have that $e_{h+1}$ can  not transfer valence to  $e_{i}$ with respect to  $f$ and $e_{h}$ can not  transfer valence to  $e_{i}$ with respect to  $f^{\ast}$. Since for any hyperedge $e_{j}<e_{i}$, $e_{j}\neq e_{h+1},e_{h}$, the conclusion holds by Lemma \ref{lem 10} (1).

By Lemma \ref{lem 4}, the claim holds for $e_{i}\in E_{2}\setminus E_{1}$. Thus, this completes the proof of the claim.
$\qed$

Thus, combining Fact 1 with Claims 7 and 8 (summarized in Tables 1 and 2), we have $\overline{\iota}_{O}(f)=\overline{\iota}_{O'}(f^{\ast})$ and  $\overline{\iota}_{O'}(f)=\overline{\iota}_{O}(f^{\ast})$.

\begin{table}[htp]
\begin{center}
\caption{Internal activity for $e_h$ and $e_{h+1}$,  $IA$ and $II$ denote internally active and internally inactive, respectively.}
 \begin{tabular}{|c |c|c|c|c|}
  \hline
  & $(f,O)$& $(f^{\ast},O')$& $(f,O')$& $(f^{\ast},O)$ \\
 \hline
 $e_{h}$&IA (Claim 7)& II& IA& IA (Claim 7)\\
 \hline
 $e_{h+1}$&II& IA (Claim 7)& IA (Claim 7)& IA\\
 \hline
\end{tabular}
\end{center}
\end{table}

\begin{table}[htp]
\begin{center}
\caption{Internal activity for each hyperedge in $E_2$.}
 \begin{tabular}{ c cc c c }

  $(f,O)$& $\stackrel{\text{\tiny Claim 8}}{\Large \Longleftrightarrow}$& $(f^{\ast},O)$ \\

 \tiny{Fact 1}\Large
 $\Updownarrow$& & \tiny{Fact 1}\Large
 $\Updownarrow$ \\

 $(f,O')$& & $(f^{\ast},O')$\\

\end{tabular}
\end{center}
\end{table}

Next, the externally activity of $e_{h}$ with respect to  $f$ in $O$ is considered.
If $e_{h}$  is externally inactive with respect to  $f$ in $O$, then $\overline{\epsilon}_{O}(f)=\overline{\epsilon}_{O'}(f)$, and  there is a hyperedge $e_{i}\in E_{1}$ such that $e_{i}$ can  transfer valence to $e_{h}$ with respect to  $f$. By  Lemma \ref{lem 5} (2), we know that $e_{i}$  can transfer valence to $e_{h+1}$ with respect to  $f^{\ast}$. This implies that $e_{h+1}$  is externally inactive in $O'$ with respect to  $f^{\ast}$.  Thus,  $\overline{\epsilon}_{O}(f^{\ast})=\overline{\epsilon}_{O'}(f^{\ast})$.

Assume that $e_{h}$  is externally active in $O$ with respect to  $f$. Then  $\overline{\epsilon}_{O}(f)=\overline{\epsilon}_{O'}(f)-1$. To prove  $\overline{\epsilon}_{O}(f)=\overline{\epsilon}_{O'}(f^{\ast})$ and   $\overline{\epsilon}_{O}(f^{\ast})=\overline{\epsilon}_{O'}(f)$, we need the following two claims.

\textbf{Claim $7'$.} With respect to $f$ and  $f^{\ast}$, $e_{h}$ ($e_{h+1}$, resp.) is externally active  in $O$ ($O'$, resp.).

\emph{Proof of Claim $7'$.} Since $e_{h}$  is externally active in $O$ with respect to  $f$, for any hyperedge $e_{i}\in E_{1}$, $e_{i}$ can not transfer valence to  $e_{h}$. It follows from Lemma \ref{lem 6} that either (i) $f(e_{i})=0$ or (ii) there is a  subset $E''\subset  E$ which contains $e_{h}$ and does not contain $e_{i}$, such that it is  tight at $f$. If $f(e_{i})=0$, then $e_{i}$ can transfer valence to neither $e_{h}$ nor $e_{h+1}$ with respect to $f$ and $f^{\ast}$ as $f^{\ast}(e_{i})=f(e_{i})=0$.
Assume that (ii) holds.
Then  $E''$  contains $e_{h+1}$. Otherwise, $\mu(E'')=\sum\limits_{e\in E''}f(e)<\sum\limits_{e\in E''}f^{\ast}(e)$, a contradiction. It implies that  $\mu(E'')=\sum\limits_{e\in E''}f(e)=\sum\limits_{e\in E''}f^{\ast}(e)$, that is, $E'$ is also tight at $f^{\ast}$.  We have that   with respect to $f$ and  $f^{\ast}$, and  any hyperedge $e_{i}\in E_{1}$ can transfer valence to neither $e_{h}$ nor $e_{h+1}$.  Thus, the claim  holds.$\qed$

\textbf{Claim $8'$.}  For the order $O$ and any hyperedge $e_{i}\in E_{2}$, $e_{i}$ is externally active  with respect to  $f$ if and only if $e_{i}$ is externally active with respect to  $f^{\ast}$.

\emph{Proof of Claim $8'$.}  We firstly consider that $e_{i}\in E_{1}$. By Claim $7'$, we have that for any hyperedge $e_{j}<e_{i}$ (note that $e_{j}\neq e_{h+1},e_{h}$),  $e_{j}$ can  not transfer valence to  $e_{h}$ with respect to  $f$ and $e_{j}$ can not  transfer valence to  $e_{h+1}$ with respect to  $f^{\ast}$. By Lemma \ref{lem 10} (2), the conclusion holds.

By Lemma \ref{lem 4}, the claim holds for $e_{i}\in E_{2}\setminus E_{1}$. Thus, this completes the proof of the claim.
$\qed$

\begin{table}[htp]
\begin{center}
\caption{External activity for $e_h$ and $e_{h+1}$,  $EA$ and $EI$ denote externally active and externally inactive, respectively.}
 \begin{tabular}{|c |c|c|c|c|}
  \hline
  & $(f,O)$& $(f^{\ast},O')$& $(f,O')$& $(f^{\ast},O)$ \\
 \hline
 $e_{h}$&EA (Claim $7'$)& EI& EA& EA (Claim $7'$)\\
 \hline
 $e_{h+1}$&EI& EA (Claim $7'$)& EA (Claim $7'$)& EA\\
 \hline
\end{tabular}
\end{center}
\end{table}

\begin{table}[htp]
\begin{center}
 \caption{External activity for each hyperedge in $E_2$.}
 \begin{tabular}{ c cc c c }

  $(f,O)$& $\stackrel{\text{\tiny Claim $8'$}}{\Large \Longleftrightarrow}$& $(f^{\ast},O)$ \\

 \tiny{Fact 1}\Large
 $\Updownarrow$& & \tiny{Fact 1}\Large
 $\Updownarrow$ \\

 $(f,O')$& & $(f^{\ast},O')$\\

\end{tabular}
\end{center}
\end{table}

Thus, combining Fact 1 with Claims $7'$ and $8'$ (summarized in Tables 3 and 4) , we have $\overline{\epsilon}_{O}(f)=\overline{\epsilon}_{O'}(f_{1})$ and $\overline{\epsilon}_{O'}(f)=\overline{\epsilon}_{O}(f_{1})$.

Finally, we consider $\sum\limits_{g\in \mathcal{F}} x^{\overline{\iota}(g)}$ and $\sum\limits_{g\in \mathcal{F}} y^{\overline{\epsilon}(g)}$ under two distinct orders $O$ and $O'$. If $l=1$, then by Case 1, $\sum\limits_{g\in \mathcal{F}} x^{\overline{\iota}(g)}$ is equal under two distinct orders $O$ and $O'$, and it is similar for $\sum\limits_{g\in \mathcal{F}} y^{\overline{\epsilon}(g)}$.
If $l=2$, then  by Case 3, $\sum\limits_{g\in \mathcal{F}} x^{\overline{\iota}(g)}$ is equal under two distinct orders $O$ and $O'$, and it is similar for $\sum\limits_{g\in \mathcal{F}} y^{\overline{\epsilon}(g)}$. Assume $l\geq 3$. Then by Case 2, $\overline{\iota}_{O}(g_{i})=\overline{\iota}_{O'}(g_{i})$ and $\overline{\epsilon}_{O}(g_{i})=\overline{\epsilon}_{O'}(g_{i})$ for each $i=2,3,\cdots,l-1$, and by Case 3, $x^{\overline{\iota}(g_{1})}+x^{\overline{\iota}(g_{l})}$ is equal under two distinct orders $O$ and $O'$, and it is similar for $y^{\overline{\epsilon}(g_{1})}+y^{\overline{\epsilon}(g_{l})}$. We have that $\sum\limits_{g\in \mathcal{F}} x^{\overline{\iota}(g)}$ is equal under two distinct orders $O$ and $O'$, and it is similar for $\sum\limits_{g\in \mathcal{F}} y^{\overline{\epsilon}(g)}$.

Thus, $I_{\mathcal{H},O}(x)=I_{\mathcal{H},O'}(x)$ and $X_{\mathcal{H},O}(y)=X_{\mathcal{H},O'}(y)$.
This completes the proof.
$\qed$
\section*{Acknowledgements}
\noindent
This work is supported by NSFC (No. 12171402) and the Fundamental Research
Funds for the Central Universities  (No. 20720190062).

\section*{References}
\bibliographystyle{model1b-num-names}
\bibliography{<your-bib-database>}

\end{document}